\numberwithin{equation}{section} 
\newif\ifbiber
\DeclareCiteCommand{\cite}{%
	\ifbibmacroundef{cite:init}{}{\usebibmacro{cite:init}}\usebibmacro{prenote}%
}{%
	\usebibmacro{citeindex}%
	\printtext[bibhyperref]{\usebibmacro{cite}}%
}{%
	\ifbibmacroundef{cite:init}{\multicitedelim}{}%
}{%
	\usebibmacro{postnote}%
}%
\DeclareCiteCommand{\parencite}[\mkbibbrackets]{%
	\ifbibmacroundef{cite:init}{}{\usebibmacro{cite:init}}\usebibmacro{prenote}%
}{%
	\usebibmacro{citeindex}%
	\printtext[bibhyperref]{\usebibmacro{cite}}%
}{%
	\ifbibmacroundef{cite:init}{\multicitedelim}{}%
}{%
	\usebibmacro{postnote}%
}%
\let\cite\parencite
\providecommand\given{\nonscript\;\delimsize|\nonscript\;\mathopen{}}
\DeclarePairedDelimiterX\set[1]\{\}{#1}
\DeclarePairedDelimiterX\seq[1](){#1}
\DeclarePairedDelimiterX\dual[2]{\langle}{\rangle}{#1,#2}
\DeclarePairedDelimiter\abs{\lvert}{\rvert}
\DeclarePairedDelimiter\norm{\lVert}{\rVert}
\DeclarePairedDelimiter\parens()
\DeclarePairedDelimiter\bracks[]
\newcommand\N{\mathbb{N}}
\newcommand\R{\mathbb{R}}
\renewcommand\d{\mathrm{d}}
\newcommand{\dualspace}{^\star}
\newcommand{\graph}{\operatorname{graph}}
\newcommand\dom{\operatorname{dom}}
\newcommand\interior{\operatorname{int}}
\newcommand\orcid[1]{%
	\hspace{.25em}%
	\href{http://orcid.org/#1}{%
		\protect\includegraphics[height=1em]{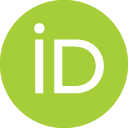}%
	}%
	\hspace{.25em}%
}
\newtheorem{theorem}{Theorem}[section]
\newtheorem{lemma}[theorem]{Lemma}
\newtheorem{remark}[theorem]{Remark}
\newtheorem{example}[theorem]{Example}
\begin{document}
\title{A simple proof of the Baillon--Haddad theorem on open subsets of Hilbert spaces%
\footnote{
This research was supported by the German Research Foundation (DFG) under grant numbers WA~3626/3-2 and WA 3636/4-2
 within the priority program ``Non-smooth and Complementarity-based Distributed Parameter
Systems: Simulation and Hierarchical Optimization'' (SPP 1962).}
}

\author{%
	Daniel Wachsmuth%
	\footnote{
		Institut f\"ur Mathematik,
		Universit\"at W\"urzburg,
		97074 W\"urzburg, Germany, \email{daniel.wachsmuth@mathematik.uni-wuerzburg.de}
	}
	\and
	Gerd Wachsmuth%
	\footnote{%
		Brandenburgische Technische Universität Cottbus-Senftenberg,
		Institute of Mathematics,
		03046 Cott\-bus, Germany,
		\url{https://www.b-tu.de/fg-optimale-steuerung},
		\email{gerd.wachsmuth@b-tu.de},
		\orcid{0000-0002-3098-1503}%
	}
}
\publishers{}
\maketitle

\begin{abstract}
	We give a simple proof of the Baillon--Haddad theorem
	for convex functions defined
	on open and convex subsets of Hilbert spaces.
	We also state some generalizations and limitations.
	In particular, we discuss equivalent characterizations of the
	Lipschitz continuity of the derivative of convex functions
	on open and convex subsets of Banach spaces.
\end{abstract}

\begin{keywords}
	Baillon--Haddad theorem,
	cocoercivity,
	strong smoothness
\end{keywords}

\begin{msc}
	\mscLink{26B25},
	\mscLink{47H05},
	\mscLink{47N10},
	\mscLink{49J50}
\end{msc}

\section{Introduction}
\label{sec:intro}
A very important result in convex analysis is the
Baillon--Haddad theorem
which states
that the derivative $f'$ is $\frac1L$-cocoercive
whenever $f \colon X \to \R$
is convex and differentiable with $L$-Lipschitz continuous derivative,
see
\cite[Corollaire~10]{BaillonHaddad1977}.
Here, $X$ is a (real) Banach space.
In \cite[Theorem~3.1]{PerezArosVilches2019}
it was shown that this remains true
if $f$ is defined on an open and convex subset of a (real) Hilbert space.
The corresponding proof is quite involved,
since it uses generalized second-order derivatives
and a reduction to the finite-dimensional situation.
We also refer to \cite{BauschkeCombettes2009} for further comments and references.
We give a short and direct proof,
see \cref{sec:proof}.

It is well known that the $L$-Lipschitz continuity of the derivative of $f$ is
equivalent to a number of important properties of $f$ and its convex conjugate.
In \cref{sec:gen_and_lim}, we investigate which of these equivalences remain valid
if $f$ is defined on an open and convex subset of a (real) Banach or Hilbert space.

\section{The Baillon--Haddad theorem on open subsets of Hilbert spaces}
\label{sec:proof}
We start by a characterization of differentiable functions with Lipschitz derivatives.
\begin{lemma}
	\label{lem:lipschitz_derivative}
	Let $O \subset H$ be an open and convex subset of the Hilbert space $H$.
	Suppose that $f \colon O \to \R$ is (Gâteaux) differentiable.
	For $L \ge 0$, the following are equivalent.
	\begin{enumerate}
		\item
			\label{lem:lipschitz_derivative:1}
			The derivative $f' \colon O \to H\dualspace$ is $L$-Lipschitz,
			i.e.,
			\begin{equation}
				\label{eq:L_lipschitz}
				\norm{
					f'(y) - f'(x)
				}_{H\dualspace}
				\le
				L \norm{
					y - x
				}_H
				\qquad\forall x,y \in O
				.
			\end{equation}
		\item
			\label{lem:lipschitz_derivative:2}
			Both $f'$ and $-f'$ satisfy a one-sided Lipschitz estimate with constant $L$,
			i.e.,
			\begin{equation}
				\label{eq:two_sided_lipschitz}
				\abs{
					\dual{f'(y) - f'(x)}{y - x}_H
				}
				\le
				L \norm{y - x}_H^2
				\qquad\forall x,y \in O.
			\end{equation}
		\item
			\label{lem:lipschitz_derivative:3}
			The function $f$
			has a first order Taylor expansion with remainder $\frac L2 \norm{\cdot}_H^2$, i.e.,
			\begin{equation}
				\label{eq:taylor}
				\abs[\big]{
					f(y) - f(x) - \dual{f'(x)}{y - x}_H
				}
				\le
				\frac L2 \norm{y - x}_H^2
				\qquad\forall x,y \in O.
			\end{equation}
	\end{enumerate}
\end{lemma}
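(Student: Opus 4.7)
I plan to argue the cyclic chain \cref{lem:lipschitz_derivative:1} $\Rightarrow$ \cref{lem:lipschitz_derivative:2} $\Rightarrow$ \cref{lem:lipschitz_derivative:3} $\Rightarrow$ \cref{lem:lipschitz_derivative:1}. The implication \cref{lem:lipschitz_derivative:1} $\Rightarrow$ \cref{lem:lipschitz_derivative:2} is immediate from the duality estimate $\abs{\dual{u}{v}_H} \le \norm{u}_{H\dualspace}\norm{v}_H$ with $u = f'(y) - f'(x)$ and $v = y-x$. For \cref{lem:lipschitz_derivative:2} $\Rightarrow$ \cref{lem:lipschitz_derivative:3}, the convexity of $O$ places the segment $[x,y]$ in $O$, so $t \mapsto f(x + t(y-x))$ is differentiable on $[0,1]$ and the fundamental theorem of calculus gives
\begin{equation*}
f(y) - f(x) - \dual{f'(x)}{y-x}_H = \int_0^1 \dual{f'(x + t(y-x)) - f'(x)}{y-x}_H \,\d t.
\end{equation*}
Applying \cref{lem:lipschitz_derivative:2} at the pair $(x, x+t(y-x))$ bounds the integrand by $L t \norm{y-x}_H^2$, and integrating produces the $\tfrac{L}{2}$-constant.

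For the main implication \cref{lem:lipschitz_derivative:3} $\Rightarrow$ \cref{lem:lipschitz_derivative:1}, I would proceed in two sub-steps. First, writing \cref{lem:lipschitz_derivative:3} at both $(x,y)$ and $(y,x)$ and adding the two resulting Taylor inequalities cancels $f(y) - f(x)$ and recovers \cref{lem:lipschitz_derivative:2}. Second, to pass to the norm estimate, I fix $x, y \in O$, take a unit Riesz representative $\hat u \in H$ of $f'(y) - f'(x)$ so that $\dual{f'(y) - f'(x)}{\hat u}_H = \norm{f'(y) - f'(x)}_{H\dualspace}$, and use the openness of $O$ to pick $t > 0$ small enough that the perturbed points $y \pm t\hat u$ lie in $O$. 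Three applications of \cref{lem:lipschitz_derivative:3} at the pairs $(x, y \pm t\hat u)$, $(y, y \pm t\hat u)$, $(x, y)$ combine (after cancellation of the $f$-values) into
\begin{equation*}
t\, \norm{f'(y) - f'(x)}_{H\dualspace} \le L \norm{y-x}_H^2 + L t^2 \pm L t\, \dual{y-x}{\hat u}_H,
\end{equation*}
valid for both sign choices; taking the more favourable sign kills the cross term and, after optimizing $t$ of order $\norm{y-x}_H$ and invoking the already-established \cref{lem:lipschitz_derivative:2}, yields $\norm{f'(y) - f'(x)}_{H\dualspace} \le L \norm{y-x}_H$.

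The main obstacle I expect is obtaining the \emph{sharp} constant $L$ rather than a multiple: the symmetric perturbation above already delivers $L$ when $\hat u$ is parallel to $y-x$, but only gives the crude bound $2L$ in the perpendicular case. To close this gap one can exploit that \cref{lem:lipschitz_derivative:3} is equivalent to the simultaneous convexity of $f + \tfrac{L}{2}\norm{\cdot}_H^2$ and $-f + \tfrac{L}{2}\norm{\cdot}_H^2$ on $O$; the monotonicity of the derivatives of these auxiliary convex functions, together with a Bregman-type quadratic estimate, should upgrade the constant to $L$ uniformly in the direction of $\hat u$.
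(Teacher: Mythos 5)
Your implications ``\ref{lem:lipschitz_derivative:1}$\Rightarrow$\ref{lem:lipschitz_derivative:2}'' and ``\ref{lem:lipschitz_derivative:2}$\Rightarrow$\ref{lem:lipschitz_derivative:3}'' are correct and coincide with the paper's argument (Cauchy--Schwarz, then the fundamental theorem of calculus along the segment). The gap lies in the closing step of ``\ref{lem:lipschitz_derivative:3}$\Rightarrow$\ref{lem:lipschitz_derivative:1}''. Your perturbation moves only the point $y$ to $y\pm t\hat u$, and, as you concede, this delivers at best the constant $2L$ when $\hat u$ is orthogonal to $y-x$; moreover the ``$-$'' sign choice puts $-t\norm{f'(y)-f'(x)}_{H\dualspace}$ on the left-hand side and is vacuous, so the cross term cannot simply be removed by choosing ``the more favourable sign''. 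The repair you sketch --- simultaneous convexity of $\frac L2\norm{\cdot}_H^2\pm f$ plus a ``Bregman-type quadratic estimate'' --- is not carried out, and the estimate you would need is precisely the descent-lemma-implies-cocoercivity inequality for a convex function on an \emph{open} set: its classical proof evaluates the function at an auxiliary minimizer of the form $y-\frac1{2L}(\cdot)$, a point that may leave $O$. Controlling exactly this obstruction is the content of the lemma (and of the paper), so as written your final step is either circular or missing its key idea.

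The paper resolves this by perturbing \emph{both} endpoints symmetrically: it writes four Taylor inequalities involving $x-d$ and $y+d$ with $d=(x-y+g)/2$, so that after summation the two quadratic remainders $\frac L4\norm{y-x+g}_H^2+\frac L4\norm{x-y+g}_H^2$ collapse, via the parallelogram identity, to $\frac L2\norm{y-x}_H^2+\frac L2\norm{g}_H^2$ with no cross term; a preliminary choice of $g$ shows admissibility of $g=G/L$, which then yields the sharp constant. Membership of all auxiliary points in $O$ is guaranteed by working on the shrunken sets $O_\rho$ from \eqref{eq_O_rho} for $\norm{y-x}_H\le\rho$ and then chaining along the segment for general $x,y\in O_\rho$. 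To salvage your route you would need an analogous device that cancels the cross term exactly while keeping every auxiliary point inside $O$; without it, the implication ``\ref{lem:lipschitz_derivative:3}$\Rightarrow$\ref{lem:lipschitz_derivative:1}'' is only established with a constant strictly larger than $L$.
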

\begin{proof}
	The implication ``\ref{lem:lipschitz_derivative:1}$\Rightarrow$\ref{lem:lipschitz_derivative:2}''
	is a simple application of the Cauchy--Schwarz inequality.

	To prove ``\ref{lem:lipschitz_derivative:2}$\Rightarrow$\ref{lem:lipschitz_derivative:3}'',
	we employ the fundamental theorem of calculus
	and get
	\begin{align*}
		\abs[\big]{
			f(y) - f(x) - \dual{f'(x)}{y - x}_H
		}
		&\le
		\abs*{
			\int_0^1 \dual{f'(x + t (y -x)) - f'(x)}{y - x}_H \, \d t
		}
		\\&
		\le
		\int_0^1 \frac1t \abs[\big]{
			\dual[\big]{f'(x + t (y -x)) - f'(x)}{t(y - x)}_H
		} \, \d t
		\\&
		\le
		\int_0^1 \frac1t
		L
		\norm{t(y - x)}_H^2
		\d t
		=
		\frac L2 \norm{y - x}_H^2
		.
	\end{align*}

	In order to check ``\ref{lem:lipschitz_derivative:3}$\Rightarrow$\ref{lem:lipschitz_derivative:1}'',
	we take an arbitrary $\rho > 0$ and set
	\begin{equation}\label{eq_O_rho}
		O_\rho :=
		\set{
			x \in O
			\given
			\forall h \in H, \norm{h}_H \le \rho : x +  h \in O
		}.
	\end{equation}
	It is clear that $O_\rho$ is again convex.
	Let us choose some $x,y \in O_\rho$
	with $\norm{y - x}_H \le \rho$
	and $d \in H$ with $y+d, x-d \in O$.
	From \eqref{eq:taylor},
	we get the inequalities
	\begin{align*}
		f(x-d) - f(y) - \dual{f'(y)}{x-d-y}_H &\le \frac L2 \norm{y-x+d}_H^2,\\
		f(y+d) - f(x) - \dual{f'(x)}{y+d-x}_H &\le \frac L2 \norm{y-x+d}_H^2,\\
		-\parens[\big]{f(y+d) - f(y) - \dual{f'(y)}{\mrep[r]{d}{-d}}_H} &\le \frac L2 \norm{d}_H^2,\\
		-\parens[\big]{f(x-d) - f(x) - \dual{f'(x)}{-d}_H} &\le \frac L2 \norm{d}_H^2.
	\end{align*}
	Adding these inequalities leads to
	\begin{equation*}
		\dual{f'(y) - f'(x)}{y-x+2 d}_H
		\le
		L \norm{y-x+d}_H^2 + L \norm{d}_H^2.
	\end{equation*}
	Next, we specialize to
	$d = (x - y + g) / 2$ for some $g \in H$ with $\norm{g}_H \le \rho$.
	Note that
	$\norm{d}_H \le (\norm{y-x}_H + \norm{g}_H)/2 \le \rho$.
	Together with $x,y \in O_\rho$ we find
	$x - d, y + d \in O$.
	For arbitrary $g \in H$ with $\norm{g}_H \le \rho$, this leads to
	\begin{equation}
		\label{eq:ineq_with_g}
		\dual{f'(y) - f'(x)}{g}_H
		\le
		\frac L4 \norm{y - x + g}_H^2 + \frac L4 \norm{x - y + g}_H^2
		=
		\frac L2 \norm{y - x}_H^2 + \frac L2 \norm{g}_H^2
		,
	\end{equation}
	where we used the parallelogram identity.
	We denote by $G \in H$ the Riesz representative of $f'(y) - f'(x) \in H\dualspace$.
	In case
	$\norm{G}_H = \norm{f'(y) - f'(x)}_{H\dualspace} \ge \rho L$,
	we take $g = \rho G / \norm{G}_H$
	in \eqref{eq:ineq_with_g}
	and obtain
	\begin{align*}
		\rho \norm{f'(y) - f'(x)}_{H\dualspace}
		=
		\dual{f'(y) - f'(x)}{g}_H
		\le
		\frac L2 \norm{y - x}_H^2 + \frac L2 \norm{g}_H^2
		\le
		L \rho^2
		.
	\end{align*}
	This yields $\norm{G}_H \le \rho L$.
	Thus, we can insert $g = G / L$
	(in case $L > 0$)
	in \eqref{eq:ineq_with_g}
	and obtain
	\begin{equation*}
		\frac 1 L \norm{f'(y) - f'(x)}^2_{H\dualspace}
		\le
		\frac L2 \norm{y - x}_H^2 + \frac 1 {2 L} \norm{f'(y) - f'(x)}_{H\dualspace}^2
		.
	\end{equation*}
	Altogether,
	this shows
	\begin{equation*}
		\norm{f'(y) - f'(x)}_{H\dualspace}
		\le
		L \norm{y - x}_H
		\qquad
		\forall x,y \in O_\rho, \norm{y - x}_H \le \rho
		.
	\end{equation*}
	For arbitrary $x,y \in O_\rho$,
	we can choose $n \in \N$, $n \ge \norm{y - x}_H / \rho$
	and set
	$x_i := x + i (y - x) / n$, $i = 0,\ldots, n$.
	Due to $\norm{x_i - x_{i-1}}_H = \norm{y - x}_H / n \le \rho$,
	we can use the above estimate to achieve
	\begin{align*}
		\norm{f'(y) - f'(x)}_{H\dualspace}
		&=
		\norm{f'(x_n) - f'(x_0)}_{H\dualspace}
		\le
		\sum_{i = 1}^n \norm{f'(x_i) - f'(x_{i-1})}_{H\dualspace}
		\\
		&\le
		\sum_{i = 1}^n L \norm{x_i - x_{i-1}}_H
		=
		\sum_{i = 1}^n L \frac{\norm{y - x}_H}{n}
		= L \norm{y - x}_H
		\qquad\forall x,y \in O_\rho
		.
	\end{align*}
	Finally,
	$O = \bigcup_{\rho > 0} O_\rho$
	yields
	\eqref{eq:L_lipschitz}.
\end{proof}

The implications
``%
\ref{lem:lipschitz_derivative:1}$\Rightarrow$%
\ref{lem:lipschitz_derivative:2}$\Rightarrow$%
\ref{lem:lipschitz_derivative:3}%
''
remain to hold in the Banach space setting.
However, in the proof of
``%
\ref{lem:lipschitz_derivative:3}$\Rightarrow$%
\ref{lem:lipschitz_derivative:1}%
''%
, we have utilized the parallelogram identity in \eqref{eq:ineq_with_g}.
Thus, the proof does not generalize to Banach spaces.
Instead, we could employ the triangle inequality,
which leads to
\begin{equation*}
	\frac L4 \norm{y - x + g}_H^2 + \frac L4 \norm{x - y + g}_H^2
	\le
	L \norm{y - x}_H^2 + L \norm{g}_H^2
	.
\end{equation*}
By adapting the remaining part of the proof,
we still arrive at \eqref{eq:L_lipschitz}
but with $2 L$ instead of $L$.

By means of an example, we demonstrate that the assertion of \cref{lem:lipschitz_derivative}
indeed fails in Banach spaces.
\begin{example}
	\label{ex:banach_lipschitz}
	We choose $X = (\R^2, \norm{\cdot}_\infty)$,
	thus, $X\dualspace = (\R^2, \norm{\cdot}_1)$.
	We define $f \colon \R^2 \to \R$ via
	\begin{equation*}
		f(x_1, x_2) := \frac12 (x_1^2 - x_2^2)
	\end{equation*}
	Thus,
	\begin{equation*}
		\abs{\dual{ f'(y) - f'(x)}{ y - x}_{\R^2}}
		=  \abs{(y_1 - x_1)^2 - (y_2 - x_2)^2}
		\le
		\norm{ y - x }_\infty^2
	\end{equation*}
	for all $x,y \in \R^2$,
	i.e., \eqref{eq:two_sided_lipschitz}
	is satisfied with $L = 1$.
	However,
	\begin{equation*}
		\norm{ f'(x) - f'(0) }_1
		= \abs{x_1} + \abs{-x_2}
		\le \hat L \norm{ x - 0 }_\infty
		\quad\forall x \in \R^2
	\end{equation*}
	only holds
	for $\hat L \ge 2$.
	Thus, $f' \colon X \to X\dualspace$ is only Lipschitz continuous with constant $2$.
\end{example}

Further, we need a characterization of cocoercive operators
on Hilbert spaces.
\begin{lemma}[\texorpdfstring{\cite[Proposition~4.2]{BauschkeCombettes2011}}{[Bauschke and Combettes, 2011, Proposition 4.2]}]
	\label{lem:cocoercive}
	Let $O \subset H$ be a subset of the Hilbert space $H$.
	Then, for $T \colon O \to H\dualspace$ and $L > 0$
	the following are equivalent.
	\begin{enumerate}
		\item
			$T$ is $1/L$-cocoercive, i.e.,
			\begin{equation*}
				\dual{T(y) - T(x)}{y - x}_H
				\ge
				\frac1L \norm{T(y) - T(x)}_{H\dualspace}^2
				\qquad\forall x,y \in O.
			\end{equation*}
		\item
			$2 T/L - R$ is nonexpansive,
			i.e.,
			\begin{equation*}
				\norm{
					2 T(y)/L - R(y)
					-
					\parens*{
						2 T(x)/L - R(x)
					}
				}_{H\dualspace}
				\le
				\norm{y - x}_H
				\qquad\forall x,y \in O.
			\end{equation*}
	\end{enumerate}
	Here, $R \colon H \to H\dualspace$ is the Riesz isomorphism of $H$.
\end{lemma}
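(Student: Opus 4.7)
The plan is to show that the claim follows from a direct computation, without any non-trivial ingredient beyond the defining properties of the Riesz isomorphism. Specifically, I would expand the squared $H\dualspace$-norm appearing in~(ii) and check that it differs from $\norm{y-x}_H^2$ by exactly the quantity which characterizes $1/L$-cocoercivity in~(i).

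Concretely, fix $x,y \in O$ and write $u := T(y) - T(x) \in H\dualspace$ and $v := y - x \in H$. Then the operator in~(ii) has the increment $2u/L - R(v)$, an element of $H\dualspace$. Since $R$ is an isometric isomorphism, its inverse preserves norms, and the Riesz representative of $2u/L - R(v)$ is $2 R^{-1}(u)/L - v \in H$. Using $\norm{R^{-1}(u)}_H = \norm{u}_{H\dualspace}$ together with the identity $\dual{u}{v}_H = \langle R^{-1}(u), v \rangle_H$, expanding the squared $H$-norm by bilinearity yields
\begin{equation*}
	\norm[\big]{2 T(y)/L - R(y) - \parens[\big]{2 T(x)/L - R(x)}}_{H\dualspace}^2
	=
	\frac{4}{L^2} \norm{u}_{H\dualspace}^2
	- \frac{4}{L} \dual{u}{v}_H
	+ \norm{v}_H^2.
\end{equation*}

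Since $L > 0$, the nonexpansivity condition in~(ii) is equivalent to the right-hand side being bounded above by $\norm{v}_H^2$, which after cancellation and division by $4/L$ is precisely the $1/L$-cocoercivity inequality in~(i). Because every step is an equivalence, both implications are obtained simultaneously.

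I do not expect any real obstacle: the only thing requiring care is the notational discipline of distinguishing the inner product on $H$, the inner product on $H\dualspace$, the duality pairing $\dual{\cdot}{\cdot}_H$, and the Riesz isomorphism $R$. Once these are written out, the equivalence is algebraic.
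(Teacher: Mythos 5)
Your computation is correct and is exactly the ``simple and straightforward calculation'' the paper alludes to: it cites \cite[Proposition~4.2]{BauschkeCombettes2011} and omits the proof, which amounts to the same expansion of $\norm{2R^{-1}(u)/L - v}_H^2$ and cancellation that you carry out. The only implicit step worth making explicit is that squaring the nonexpansivity inequality is an equivalence because both sides are nonnegative; otherwise nothing is missing.
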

We note that this result follows from some simple and straightforward calculations.

As a last prerequisite,
we show that the so-called strong smoothness of $f$ implies
Gâteaux differentiability.
\begin{lemma}
	\label{lem:smooth_implies_differentiable}
	Let $O \subset X$ be an open and convex subset of the Banach space $X$.
	Suppose that $f \colon O \to \R$ is convex, lower semicontinuous
	and strongly smooth, i.e.,
	\begin{equation*}
		f\parens[\big]{ \lambda x + (1-\lambda) y}
		+
		\frac{L}{2} \lambda (1-\lambda) \norm{y - x}_X^2
		\ge
		\lambda f(x) + (1-\lambda) f(y)
		\quad\forall x,y \in O, \lambda \in (0,1)
		.
	\end{equation*}
	holds for some $L > 0$.
	Then, $f$ is Gâteaux differentiable on $O$.
\end{lemma}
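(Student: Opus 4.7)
The plan is to establish Gâteaux differentiability of $f$ at every $x \in O$ by showing that the one-sided directional derivative
\[
f'_+(x;h) := \lim_{t \downarrow 0} \frac{f(x+th) - f(x)}{t}
\]
exists, is linear in $h$, and is continuous. As a preparatory step I would invoke the classical result that a proper, convex, lower semicontinuous function on a Banach space is continuous on the interior of its domain (proved via the Baire category theorem applied to the sublevel sets). Since $f$ is finite on the open set $O$, this makes $f$ continuous, and hence locally Lipschitz, on $O$. Under this continuity, $h \mapsto f'_+(x;h)$ is well-defined, real-valued, and sublinear on $X$ for every $x \in O$, by standard facts for convex functions on open sets.

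The core computation exploits strong smoothness with $\lambda = 1/2$ applied to the symmetric pair $x + th$, $x - th$, whose midpoint is $x$ and whose difference has length $2t \norm{h}_X$. This immediately produces
\[
f(x+th) + f(x-th) \le 2 f(x) + L t^2 \norm{h}_X^2
\]
for all $t > 0$ with $x \pm t h \in O$. Subtracting $2 f(x)$, dividing by $t$, and letting $t \downarrow 0$ yields
\[
f'_+(x;h) + f'_+(x;-h) \le 0.
\]
The reverse inequality is free, since sublinearity gives $f'_+(x;h) + f'_+(x;-h) \ge f'_+(x;0) = 0$. Hence $f'_+(x;-h) = -f'_+(x;h)$. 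Combining this oddness with subadditivity in the standard two-sided fashion forces $f'_+(x;\cdot)$ to be additive, so together with positive homogeneity it is linear on $X$. Local Lipschitz continuity of $f$ then supplies the bound $\abs{f'_+(x;h)} \le M \norm{h}_X$ for a local Lipschitz constant $M$, making this linear functional continuous. Oddness also identifies the two-sided directional derivative with $f'_+(x;\cdot)$, so $f$ is Gâteaux differentiable at $x$.

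The only subtle point I anticipate is the continuity of $f$ on $O$. Lower semicontinuity gives only a local lower bound, and the strong smoothness inequality controls $f(x+th) + f(x-th)$ from above by $2 f(x) + O(t^2)$ but permits either summand to be very negative, so these hypotheses by themselves do not obviously yield local boundedness from above. I would therefore bypass a direct argument and simply cite the standard Banach-space theorem that proper lsc convex functions are continuous on the interior of their domain; after that, all remaining steps in the outline are elementary.
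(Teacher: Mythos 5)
Your proposal is correct and follows essentially the same route as the paper: both apply the strong smoothness inequality to the symmetric pair $x\pm th$ with $\lambda=1/2$, divide by $t$, and let $t\searrow 0$ to obtain $f'(x;h)+f'(x;-h)\le 0$, then combine this with the reverse inequality from sublinearity and with the local Lipschitz continuity of $f$ on $O$ (a standard fact for convex, lsc, finite functions on open subsets of Banach spaces, which the paper cites from Z\u{a}linescu) to conclude that $f'(x;\cdot)$ is a continuous linear functional. The only difference is that you spell out the oddness-plus-subadditivity argument for linearity in more detail, which the paper delegates to the cited reference.
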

\begin{proof}
	For $x \in O$, $h \in X$ and $t > 0$ small enough,
	we apply the smoothness inequality to $x \pm t h$
	and $\lambda = 1/2$.
	This yields
	\begin{equation*}
		f(x) + \frac{L}{2} \norm{t h}_X^2
		\ge
		\frac12 f(x + t h) + \frac12 f(x - t h).
	\end{equation*}
	Sorting terms and dividing by $t/2$ gives
	\begin{equation*}
		0
		\ge
		\lim_{t \searrow 0}
		\parens*{
			\frac{f(x + t h) - f(x)}{t} + \frac{f(x - t h) - f(x)}{t}
			-
			L t \norm{h}_X^2
		}
		=
		f'(x; h) + f'(x; -h)
		.
	\end{equation*}
	Recall that the existence of the directional derivatives
	follows from $x \in O = \interior(\dom f)$,
	see \cite[Theorem~2.1.13]{Zalinescu2002}.
	This result also gives the sublinearity $f'(x; h) + f'(x; -h) \ge 0$,
	thus $f'(x; \cdot)$ is linear.
	Since $f$ is locally Lipschitz continuous on $O$ by
	\cite[Theorems~2.2.11, 2.2.20]{Zalinescu2002},
	the functional $f'(x; \cdot)$
	is Lipschitz continuous as well.
	Thus, $f'(x; \cdot) \in X\dualspace$
	and this shows the Gâteaux differentiability of $f$.
\end{proof}

Now we are in position to prove the main result.
\begin{theorem}[Baillon--Haddad theorem]
	\label{thm:main}
	Let $O \subset H$ be an open and convex subset of the Hilbert space $H$.
	Suppose that $f \colon O \to \R$
	is convex.
	Then, for $L > 0$, the following are equivalent.
	\begin{enumerate}
		\item
			\label{thm:main:1}
			$f$ is (Gâteaux) differentiable
			and $f'$ is $L$-Lipschitz.
		\item
			\label{thm:main:2}
			$\frac L2 \norm{\cdot}_H^2 - f$ is convex
			and $f$ is lower semicontinuous.
		\item
			\label{thm:main:3}
			$f$ is (Gâteaux) differentiable
			and $f'$ is $1/L$-cocoercive.
	\end{enumerate}
\end{theorem}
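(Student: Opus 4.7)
The plan is to prove the cycle (i)$\Rightarrow$(ii), (ii)$\Rightarrow$(i), (iii)$\Rightarrow$(i), and (i)$\Rightarrow$(iii). Setting $g := \frac{L}{2}\norm{\cdot}_H^2 - f$, the upper half of the Taylor estimate \eqref{eq:taylor} is exactly the gradient inequality $g(y) \ge g(x) + \dual{g'(x)}{y-x}_H$ (after writing $g' = L R(\cdot) - f'$), while the identity $\lambda\norm{x}_H^2 + (1-\lambda)\norm{y}_H^2 - \norm{\lambda x + (1-\lambda)y}_H^2 = \lambda(1-\lambda)\norm{y-x}_H^2$ identifies convexity of $g$ with the strong smoothness hypothesis of \cref{lem:smooth_implies_differentiable}. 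Hence (i)$\Rightarrow$(ii) follows from \cref{lem:lipschitz_derivative} (differentiability gives lower semicontinuity), and (ii)$\Rightarrow$(i) by first invoking \cref{lem:smooth_implies_differentiable} to obtain Gâteaux differentiability, then combining the upper half of \eqref{eq:taylor} (from convexity of $g$) with the lower half (from convexity of $f$) to apply \cref{lem:lipschitz_derivative}. The implication (iii)$\Rightarrow$(i) is immediate from Cauchy--Schwarz on the cocoercivity inequality.

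The substantive step is (i)$\Rightarrow$(iii). For fixed $x \in O$, the auxiliary function $\phi(z) := f(z) - \dual{f'(x)}{z}_H$ is convex on $O$ with $L$-Lipschitz derivative and satisfies $\phi'(x) = 0$, so $x$ minimizes $\phi$ on $O$. Letting $G \in H$ denote the Riesz representative of $f'(y) - f'(x)$, the candidate descent step $w := y - G/L$ gives, via the Taylor estimate \eqref{eq:taylor} applied to $\phi$, the bound $\phi(w) \le \phi(y) - \frac{1}{2L}\norm{G}_H^2$; combined with $\phi(w) \ge \phi(x)$ this yields $f(y) - f(x) - \dual{f'(x)}{y-x}_H \ge \frac{1}{2L}\norm{f'(y) - f'(x)}_{H\dualspace}^2$. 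Swapping the roles of $x$ and $y$ (using $x + G/L$ as the symmetric auxiliary point) and adding the two inequalities produces exactly the cocoercivity estimate.

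The main obstacle is that $y - G/L$ and $x + G/L$ need not belong to $O$. To handle this, I would restrict to the sets $O_\rho$ from \eqref{eq_O_rho}: by (i) we have $\norm{G}_H \le L\norm{y-x}_H$, so whenever $x, y \in O_\rho$ with $\norm{y-x}_H \le \rho$, both auxiliary points lie in $O$ and the argument above establishes cocoercivity for such ``short'' pairs in $O_\rho$. Since cocoercivity does not chain directly, I would invoke \cref{lem:cocoercive} to recast the short-pair cocoercivity as short-pair nonexpansiveness of $T := 2 f'/L - R$. Nonexpansiveness does chain along line segments: for arbitrary $x, y \in O_\rho$, partitioning $[x,y] \subset O_\rho$ into pieces of length at most $\rho$ and applying the triangle inequality together with the collinearity of the partition points yields nonexpansiveness of $T$ on all of $O_\rho$. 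Finally, $O = \bigcup_{\rho > 0} O_\rho$ extends nonexpansiveness to $O$, and a second application of \cref{lem:cocoercive} returns the desired cocoercivity of $f'$ on $O$.
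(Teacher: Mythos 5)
Your proposal is correct, and its treatment of ``\ref{thm:main:1}$\Leftrightarrow$\ref{thm:main:2}'' and of ``\ref{thm:main:3}$\Rightarrow$\ref{thm:main:1}'' matches the paper's (reduction to \cref{lem:lipschitz_derivative} via the auxiliary function $\frac L2\norm{\cdot}_H^2 - f$, with \cref{lem:smooth_implies_differentiable} supplying differentiability under \ref{thm:main:2}). Where you genuinely diverge is ``\ref{thm:main:1}$\Rightarrow$\ref{thm:main:3}''. The paper gets this step almost for free: it applies the already-proven equivalence ``\ref{lem:lipschitz_derivative:2}$\Leftrightarrow$\ref{lem:lipschitz_derivative:1}'' of \cref{lem:lipschitz_derivative} to $g := 2f/L - \frac12\norm{\cdot}_H^2$, for which the two-sided estimate \eqref{eq:two_sided_lipschitz} with constant $1$ is precisely monotonicity of $f'$ plus the one-sided Lipschitz bound; nonexpansiveness of $g'$ drops out and \cref{lem:cocoercive} finishes. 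You instead run the classical descent-step argument ($\phi(z) = f(z) - \dual{f'(x)}{z}_H$ is minimized over $O$ at $x$; evaluate the Taylor bound at $y - G/L$) to obtain the strengthened inequality $f(y) \ge f(x) + \dual{f'(x)}{y-x}_H + \frac1{2L}\norm{f'(y)-f'(x)}_{H\dualspace}^2$ — i.e.\ assertion \ref{item:smoothnes:3} of \cref{thm:smoothnes} — for short pairs in $O_\rho$, then pass through \cref{lem:cocoercive} and chain nonexpansiveness along segments. Both of your key precautions are sound: the Lipschitz hypothesis gives $\norm{G/L}_H \le \norm{y-x}_H \le \rho$, so the auxiliary points stay in $O$, and nonexpansiveness does telescope along collinear partitions whereas cocoercivity does not. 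The trade-off: the paper's route is shorter because the $O_\rho$-localization and segment-chaining have already been carried out once inside \cref{lem:lipschitz_derivative}, while you repeat that work in the theorem; in exchange, your argument is the natural open-set adaptation of the standard full-space proof and exhibits the stronger inequality \ref{item:smoothnes:3} locally, which complements \cref{lem:on_smoothness_3} and \cref{rem:counterexample_and_open_question} (globally that inequality can fail on open sets).
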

\begin{proof}
	Since $H$ is assumed to be a Hilbert space,
	it is straightforward to check that
	\ref{thm:main:2}
	implies the strong smoothness of $f$,
	thus
	$f$ is also Gâteaux differentiable in case
	\ref{thm:main:2},
	see
	\cref{lem:smooth_implies_differentiable}.

	We set $h(x) := \frac L2 \norm{x}_H^2 - f(x)$ for $x \in O$.
	For the auxiliary function $h$
	we have $h'(x) = L R(x) - f'(x)$.
	This directly yields
	\begin{align*}
		\ref{thm:main:1}
		&\quad\Leftrightarrow\quad
		\dual{f'(y) - f'(x)}{y - x}_H \le L \norm{y - x}_H^2
		\quad\forall x,y \in O
		\\
		&\quad\Leftrightarrow\quad
		\dual{h'(y) - h'(x)}{y - x}_H \ge \mrep{0}{L \norm{y - x}_H^2}
		\quad\forall x,y \in O
		\\
		&\quad\Leftrightarrow\quad
		\ref{thm:main:2}
	\end{align*}
	For the other equivalence,
	we employ \cref{lem:cocoercive}.
	To this end, we apply \cref{lem:lipschitz_derivative}
	to the function $g \colon O \to \R$
	defined via $g(x) := 2 f(x) / L - \frac12 \norm{x}_H^2$.
	Note that $g'(x) = 2 f'(x) / L - R$.
	This yields
	\begin{align*}
		\ref{thm:main:1}
		&\quad\Leftrightarrow\quad
		\mrep[r]{0}{-\norm{y - x}_H^2} \le \dual{f'(y) - f'(x)}{y - x}_H \le L \norm{y - x}_H^2
		\quad\forall x,y \in O
		\\
		&\quad\Leftrightarrow\quad
		-\norm{y - x}_H^2 \le \dual{\mrep[r]{g}{f}'(y) - \mrep[r]{g}{f}'(x)}{y - x}_H \le \norm{y - x}_H^2\mrep{}{L}
		\quad\forall x,y \in O
		\\
		&\quad\Leftrightarrow\quad
		\text{$g'$ is nonexpansive}
		\quad\Leftrightarrow\quad
		\ref{thm:main:3}
		.
		\qedhere
	\end{align*}
\end{proof}
The above prove is an adaption of the proof of
\cite[Theorem~3.3]{BauschkeCombettes2009}
to the situation without second-order differentiability
and, thus, gives a simple answer to \cite[Remark~3.5]{BauschkeCombettes2009}.
It is currently not clear whether the equivalence
of \ref{thm:main:1} and \ref{thm:main:3} remains to hold
if $H$ is only assumed to be a Banach space.
Note that both main ingredients of the proof (\cref{lem:lipschitz_derivative,lem:cocoercive})
cannot be transferred directly to Banach spaces, see also the discussion in the next section.
The next example shows
that the implication ``\ref{thm:main:1}$\Rightarrow$\ref{thm:main:2}'' fails in Banach spaces.
\begin{example}
	\label{ex:banach_lipschitz_2}
	We choose $X = (\R^2, \norm{\cdot}_\infty)$,
	thus, $X\dualspace = (\R^2, \norm{\cdot}_1)$.
	We define $f \colon \R^2 \to \R$ via
	\begin{equation*}
		f(x_1, x_2) := \frac12 (x_1^2 + x_2^2)
	\end{equation*}
	Thus,
	\begin{equation*}
		\norm{f'(y) - f'(x)}_1
		=  \abs{y_1 - x_1} + \abs{y_2 - x_2}
		\le
		2\norm{ y - x }_\infty
	\end{equation*}
	for all $x,y \in \R^2$.
	Moreover,
	this inequality is satisfied with equality if $x,y\in \operatorname{span}\set{(1,1)}$.
	Hence, \ref{thm:main:1} is satisfied if and only if $L \ge 2$.

	Define $h(x):=\frac L2 \norm{x}_X^2 - f(x) = \max(x_1^2,x_2^2)- \frac12 (x_1^2 + x_2^2)$.
	Then $h( (1,0)) >0$ but $h( (1,\pm1))=0$. Consequently, $h$ is not convex, and  \ref{thm:main:2} is violated.
	In fact, $h$ is not convex for all $L>0$.
\end{example}
However,
in Hilbert spaces,
assertion
\ref{thm:main:2}
is equivalent to the
strong smoothness of $f$.
Then, the equivalence between Lipschitzness of $f'$
and strong smoothness of $f$
continues to hold in Banach spaces,
see \cref{thm:smoothnes} below.

\section{Convex functions on open, convex subsets of Banach spaces}
\label{sec:gen_and_lim}
In this section, we address generalizations and limitations
of \cref{thm:main}.
In particular, we are interested in convex functions defined on an open subset of a Banach space.
We investigate, which of the claims of \cref{thm:main} and conditions well-known to be equivalent to $L$-Lipschitz continuity of
the derivative remain true in this general situation.

\begin{theorem}
	\label{thm:smoothnes}
	For a convex and lower semicontinuous function
	$f \colon O \to \R$,
	where $O$ is an open and convex subset of a Banach space $X$,
	we consider the following assertions
	with some fixed $L > 0$.
	\begin{enumerate}
		\item
			\label{item:smoothnes:1}
			The function $f$ is strongly smooth
			\begin{equation*}
				f\parens[\big]{ \lambda x + (1-\lambda) y}
				+
				\frac{L}{2} \lambda (1-\lambda) \norm{y - x}_X^2
				\ge
				\lambda f(x) + (1-\lambda) f(y)
				\quad\forall x,y \in O, \lambda \in (0,1)
				.
			\end{equation*}
		\item
			\label{item:smoothnes:2}
			The descent lemma holds
			\begin{equation*}
				f(y) \le f(x) + \dual{x\dualspace}{y - x}_X + \frac L2 \norm{y - x}_X^2
				\qquad\forall x,y \in O, x\dualspace\in \partial f(x)
				.
			\end{equation*}
		\item
			\label{item:smoothnes:6}
			We have
			\begin{equation*}
				\dual{y\dualspace - x\dualspace}{y - x}_X \le L \norm{y - x}_{X}^2
				\qquad \forall (x,x\dualspace), (y,y\dualspace) \in \graph \partial f
				.
			\end{equation*}
		\item
			\label{item:smoothnes:5}
			The subdifferential is Lipschitz continuous
			(thus single-valued)
			\begin{equation*}
				\norm{y\dualspace - x\dualspace}_{X\dualspace} \le L \norm{y - x}_X
				\qquad \forall (x,x\dualspace), (y,y\dualspace) \in \graph \partial f
				.
			\end{equation*}
		\item
			\label{item:smoothnes:4}
			The subdifferential is cocoercive
			\begin{equation*}
				\dual{y\dualspace - x\dualspace}{y - x}_X \ge \frac1L \norm{y\dualspace - x\dualspace}_{X\dualspace}^2
				\qquad \forall (x,x\dualspace), (y,y\dualspace) \in \graph \partial f
				.
			\end{equation*}
		\item
			\label{item:smoothnes:3}
			We have
			\begin{equation*}
				f(y) \ge f(x) + \dual{x\dualspace}{y - x}_X + \frac 1{2 L} \norm{y\dualspace - x\dualspace}_{X\dualspace}^2
				\qquad \forall (x,x\dualspace), (y,y\dualspace) \in \graph \partial f
				.
			\end{equation*}
	\end{enumerate}
	Any of these conditions imply the Gâteaux differentiability of $f$ on $O$.
	Moreover, the following relations hold.
	\begin{enumerate}[label=\textup{(\alph*)}]
		\item
			\label{item:relations_1}
			We have
			``\ref{item:smoothnes:1}$\Leftrightarrow$%
			\ref{item:smoothnes:2}$\Leftrightarrow$%
			\ref{item:smoothnes:6}$\Leftrightarrow$%
			\ref{item:smoothnes:5}''
			and
			``\ref{item:smoothnes:3}$\Rightarrow$%
			\ref{item:smoothnes:4}$\Rightarrow$%
			\ref{item:smoothnes:1}''.

		\item
			\label{item:relations_2}
			In case that $X$ is a Hilbert space,
			we have additionally
			``\ref{item:smoothnes:5}$\Leftrightarrow$\ref{item:smoothnes:4}''.
			This results in
			``\ref{item:smoothnes:1}$\Leftrightarrow$%
			\ref{item:smoothnes:2}$\Leftrightarrow$%
			\ref{item:smoothnes:6}$\Leftrightarrow$%
			\ref{item:smoothnes:5}$\Leftrightarrow$%
			\ref{item:smoothnes:4}''
			and these conditions are implied by
			\ref{item:smoothnes:3}.
		\item
			\label{item:relations_3}
			In case that $O = X$, all assertions are equivalent.
	\end{enumerate}
\end{theorem}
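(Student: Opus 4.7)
The plan is to establish \ref{item:relations_1} via the cycle
\ref{item:smoothnes:1}$\Rightarrow$\ref{item:smoothnes:2}$\Rightarrow$\ref{item:smoothnes:6}$\Rightarrow$\ref{item:smoothnes:2}, combined with the equivalence \ref{item:smoothnes:2}$\Leftrightarrow$\ref{item:smoothnes:5} and the side chain \ref{item:smoothnes:3}$\Rightarrow$\ref{item:smoothnes:4}$\Rightarrow$\ref{item:smoothnes:1}; Gâteaux differentiability on $O$ then follows via \cref{lem:smooth_implies_differentiable} once \ref{item:smoothnes:1} is established, and every other condition feeds into the cycle towards \ref{item:smoothnes:1}. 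The routine implications are: \ref{item:smoothnes:1}$\Rightarrow$\ref{item:smoothnes:2} by rewriting strong smoothness with $z = x + t(y - x)$, dividing by $t$ and sending $t \searrow 0$; \ref{item:smoothnes:2}$\Rightarrow$\ref{item:smoothnes:6} by summing the descent lemma at $(x,x^\star)$ with point $y$ and at $(y,y^\star)$ with point $x$; and \ref{item:smoothnes:6}$\Rightarrow$\ref{item:smoothnes:2} by considering $\varphi(t) := f(x + t(y-x))$ on $[0,1]$, bounding its right derivative by $\dual{x^\star}{y-x}_X + Lt\norm{y-x}_X^2$ using \ref{item:smoothnes:6} between $(x,x^\star)$ and any subgradient at $x+t(y-x)$, and integrating. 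Also, \ref{item:smoothnes:5}$\Rightarrow$\ref{item:smoothnes:6} is Cauchy--Schwarz, \ref{item:smoothnes:3}$\Rightarrow$\ref{item:smoothnes:4} is a symmetrization, and \ref{item:smoothnes:4}$\Rightarrow$\ref{item:smoothnes:1} follows by first deducing $\norm{y^\star - x^\star}_{X\dualspace} \le L\norm{y - x}_X$ from \ref{item:smoothnes:4} via Cauchy--Schwarz and then chaining through the already-established implications.

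The main obstacle is \ref{item:smoothnes:2}$\Rightarrow$\ref{item:smoothnes:5} with the sharp constant $L$, since the direct Banach-space adaptation of \cref{lem:lipschitz_derivative} would only yield $2L$. My plan is to combine the descent lemma at $(x, x^\star)$ with the convex subgradient inequality $f(w) \ge f(y) + \dual{y^\star}{w - y}_X$ to derive, for every $w \in O$,
\begin{equation*}
	\dual{y^\star - x^\star}{w - x}_X - \tfrac{L}{2}\norm{w - x}_X^2 \le D_f(x, y),
\end{equation*}
where $D_f(x, y) := f(x) - f(y) - \dual{y^\star}{x - y}_X \ge 0$ is bounded by $\tfrac{L}{2}\norm{y-x}_X^2$ via the descent lemma at $(y, y^\star)$. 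The unconstrained supremum in $w \in X$ of the left-hand side equals $\norm{y^\star - x^\star}_{X\dualspace}^2/(2L)$, approached by $w = x + v$ with $\norm{v}_X = \norm{y^\star - x^\star}_{X\dualspace}/L$ in a direction extracted from Hahn--Banach. To confine the maximizer to $O$, I would work on the sets $O_\rho$ from \eqref{eq_O_rho}: for $x \in O_\rho$ and $\norm{y - x}_X \le \rho$, either the unconstrained optimizer already lies in $B_\rho(x) \subset O$, giving the sharp estimate $\norm{y^\star - x^\star}_{X\dualspace} \le L\norm{y - x}_X$, or the constrained boundary supremum combined with $\norm{y - x}_X \le \rho$ forces $\norm{y^\star - x^\star}_{X\dualspace} \le L\rho$, contradicting the complementary hypothesis $\norm{y^\star - x^\star}_{X\dualspace} > L\rho$. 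Single-valuedness of $\partial f$ drops out of the $y = x$ case, and chaining along equally spaced points of the segment $[x, y]$ (which lies in some $O_\rho$ by compactness inside the open set $O$) then extends the estimate to arbitrary pairs.

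Parts \ref{item:relations_2} and \ref{item:relations_3} follow quickly. For \ref{item:relations_2}, \cref{thm:main} applied to $f$, which by \ref{item:smoothnes:5} is Gâteaux differentiable with $L$-Lipschitz derivative, supplies the missing implication \ref{item:smoothnes:5}$\Rightarrow$\ref{item:smoothnes:4} in the Hilbert setting. For \ref{item:relations_3} with $O = X$, no localization is required: the same Fenchel-type computation shows that $f\dualspace$ is $1/L$-strongly convex on $X\dualspace$, and pairing the strong-convexity inequality for $f\dualspace$ between subgradient pairs with the Fenchel--Young equality $f(x) + f\dualspace(x^\star) = \dual{x^\star}{x}_X$ produces \ref{item:smoothnes:3}, closing the last remaining link.
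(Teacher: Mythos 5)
Your proposal is correct and follows the same overall architecture as the paper's proof: Gâteaux differentiability comes from \cref{lem:smooth_implies_differentiable} once \ref{item:smoothnes:1} is reached, part \ref{item:relations_2} is delegated to \cref{thm:main}, and part \ref{item:relations_1} is assembled from the same elementary implications in essentially the same order. The differences are local. For ``\ref{item:smoothnes:6}$\Rightarrow$\ref{item:smoothnes:2}'' the paper telescopes over $n$ equally spaced points and lets $n \to \infty$, which is just the Riemann-sum version of your integration of the right derivative of $t \mapsto f(x+t(y-x))$. For the key step ``\ref{item:smoothnes:2}$\Rightarrow$\ref{item:smoothnes:5}'' both arguments bound $\dual{y\dualspace - x\dualspace}{w - x}_X$ by $\frac L2\norm{w-x}_X^2$ plus the Bregman gap $f(x) - f(y) - \dual{y\dualspace}{x-y}_X \le \frac L2\norm{y-x}_X^2$; the paper simply takes a Hahn--Banach direction with $\norm{w - x}_X = \norm{y - x}_X$, so the test point stays in $B_\rho(x) \subset O$ automatically and the two terms balance to $L\norm{y-x}_X^2$, whereas you take the conjugate-optimal radius $\norm{y\dualspace - x\dualspace}_{X\dualspace}/L$, which forces your additional two-case argument to verify a priori that this radius is at most $\rho$. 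Both choices yield the sharp constant $L$; the paper's is slightly leaner. Finally, for \ref{item:relations_3} the paper just cites \cite[Corollary~3.5.7 and Remark~3.5.2]{Zalinescu2002}, while your duality sketch (conjugating the global descent lemma to obtain the strong-convexity inequality for $f\dualspace$ at subgradient pairs and combining with Fenchel--Young) is a valid self-contained replacement, at the cost of having to carry out that conjugation in detail.
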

\begin{proof}
	The case \ref{item:relations_3}
	follows from
	\cite[Corollary~3.5.7 and Remark~3.5.2]{Zalinescu2002}
	and case \ref{item:relations_2}
	is implied by \cref{thm:main}.
	\Cref{lem:smooth_implies_differentiable}
	shows that
	\ref{item:smoothnes:1}
	implies the Gâteaux differentiability of $f$.
	Thus, it remains to check the implications from \ref{item:relations_1}.
	For later reuse we recall that $\partial f(x) \ne \emptyset$
	for all $x \in O$,
	since $f$ is continuous on $O$,
	see \cite[Theorems~2.2.20 and 2.4.9]{Zalinescu2002}.

	``\ref{item:smoothnes:1}$\Rightarrow$\ref{item:smoothnes:2}'':
	We already know that $f$ is Gâteaux differentiable.
	Moreover, \ref{item:smoothnes:1} gives
	\begin{equation*}
		\frac{f(y + \lambda (x - y)) - f(y)}{\lambda}
		+
		\frac L2 (1- \lambda) \norm{y - x}_X^2
		\ge
		f(x) - f(y).
	\end{equation*}
	for arbitrary $\lambda \in (0,1)$
	and $\lambda \searrow 0$ results in
	\begin{equation*}
		\dual{f'(y)}{x - y}_X
		+
		\frac L2 \norm{y - x}_X^2
		\ge
		f(x) - f(y).
	\end{equation*}
	Since $\partial f(y) = \set{f'(y)}$,
	this gives
	\ref{item:smoothnes:2}
	with exchanged roles of $x$ and $y$.

	``\ref{item:smoothnes:2}$\Rightarrow$\ref{item:smoothnes:1}'':
	We set $x_\lambda := \lambda x + (1-\lambda) y \in O$
	and choose an arbitrary
	$x_\lambda\dualspace \in \partial f(x_\lambda)$.
	Thus,
	\[
		\begin{aligned}
			f( x ) &\le f(x_\lambda) + \dual{x_\lambda\dualspace}{x - x_\lambda}_X + \frac L2 \norm{x - x_\lambda}_X^2, \\
			f( y ) &\le f(x_\lambda) + \dual{x_\lambda\dualspace}{y - x_\lambda}_X + \frac L2 \norm{y - x_\lambda}_X^2.
		\end{aligned}
	\]
	We multiply the first inequality by $\lambda$
	and the second one by $(1-\lambda)$.
	Adding the resulting inequalities
	and using
	$x - x_\lambda = (1-\lambda) (x - y)$, $y - x_\lambda = \lambda (y - x)$,
	we get
	\ref{item:smoothnes:1}.

	``\ref{item:smoothnes:2}$\Rightarrow$\ref{item:smoothnes:6}'':
	This follows from adding the inequality
	with $(x,x\dualspace)$ and $(y,y\dualspace)$ exchanged.

	``\ref{item:smoothnes:6}$\Rightarrow$\ref{item:smoothnes:2}'':
	We choose an arbitrary $n \in \N$ and define
	\begin{equation*}
		x_0 := x,
		\quad
		x_n := y,
		\quad
		x_i := x + \frac{i}{n} (y - x),
		\quad
		x_0\dualspace := x\dualspace,
		\quad
		x_n\dualspace := y\dualspace
		.
	\end{equation*}
	Further we choose some arbitrary
	$x_i\dualspace \in \partial f(x_i)$ for $i = 1,\ldots, n-1$.
	Thus,
	\begin{align*}
		&f(y) - f(x) - \dual{x\dualspace}{y - x}_X
		=
		f(x_n) - f(x_0) - \dual{x_0\dualspace}{x_n - x_0}_X
		\\&\qquad
		=
		\sum_{i = 1}^n f(x_i) - f(x_{i-1}) - \dual{x_0\dualspace}{x_i - x_{i-1}}_X
		\\&\qquad
		\le
		\sum_{i = 1}^n \dual{x_i\dualspace - x_0\dualspace}{x_i - x_{i-1}}_X
		=
		\sum_{i = 1}^n \frac{1}{i} \dual{x_i\dualspace - x_0\dualspace}{x_i - x_0}_X
		\\&\qquad
		\le
		L \sum_{i = 1}^n \frac{1}{i} \norm{x_i - x_0}_X^2
		=
		L \sum_{i = 1}^n \frac{i}{n^2} \norm{x_1 - x_0}_X^2
		=
		L \frac{n+1}{2 n} \norm{y - x}_X^2
		.
	\end{align*}
	Now, the claim follows from $n \to \infty$.

	``\ref{item:smoothnes:2}$\Rightarrow$\ref{item:smoothnes:5}'':
	Since \ref{item:smoothnes:2} implies \ref{item:smoothnes:1},
	$f$ is differentiable on $O$.
	For an arbitrary $x \in O$, there exists $\rho > 0$
	with $B_\rho(x) := \set{y \in X \given \norm{y - x}_X \le \rho } \subset O$.
	Now, let additionally $y \in B_\rho(x)$ with $x \ne y$ and $\varepsilon > 0$
	be given.
	By the Hahn-Banach theorem we get $z \in X$ with $\norm{z}_X = \norm{y - x}_X \le \rho$
	and
	$(1-\varepsilon) \norm{f'(y) - f'(x)}_{X\dualspace} \norm{z}_X \le \dual{f'(y) - f'(x)}{z}_X$.
	Thus, $x + z \in B_\rho(x) \subset O$
	and we have
	\begin{align*}
		&(1-\varepsilon) \norm{f'(y) - f'(x)}_{X\dualspace} \norm{z}_X
		\\
		&\qquad\le 0 + \dual{f'(y) - f'(x)}{z}_X
		\\
		&\qquad\le
		\bracks*{f(x + z) - f(y) - \dual{f'(y)}{x+z-y}_X} + \dual{f'(y) - f'(x)}{z}_X
		\\
		&\qquad=
		\bracks{f(x + z) - f(x) - \dual{f'(x)}{z}_X}
		+
		\bracks{f(x) - f(y) - \dual{f'(y)}{x - y}_X}
		\\
		&\qquad\le
		\frac L2 \norm{z}_X^2
		+
		\frac L2 \norm{ y - x }_X^2
		=
		L \norm{ y - x }_X^2
		.
	\end{align*}
	Dividing by $\norm{z}_X = \norm{y-x}_X$
	and
	passing to the limit $\varepsilon \searrow 0$ yields
	\begin{equation*}
		\norm{f'(y) - f'(x)}_{X\dualspace}
		\le
		L\norm{y - x}_X
	\end{equation*}
	for all $x,y \in O$
	such that $y \in B_\rho(x) \subset O$
	with $\rho > 0$.

	Now, if $x,y \in O$ are arbitrary,
	there exists $\rho > 0$ with
	$B_\rho(x),B_\rho(y) \subset O$.
	We pick $n \in \N$ with $\norm{y - x}_X / n < \rho$
	and define
	$x_i := x + \frac{i}{n}(y - x)$, $i = 0,\ldots, n$.
	By convexity we have $B_\rho(x_i) \subset O$
	for all $i = 0,\ldots, n$.
	Thus,
	\begin{equation*}
		\norm{f'(y) - f'(x)}_{X\dualspace}
		\le
		\sum_{i = 1}^n \norm{f'(x_i) - f'(x_{i-1})}_{X\dualspace}
		\le
		\sum_{i = 1}^n L \norm{x_i - x_{i-1}}_{X}
		=
		L \norm{y - x}_X
	\end{equation*}
	and this shows \ref{item:smoothnes:5}.

	``\ref{item:smoothnes:3}$\Rightarrow$\ref{item:smoothnes:4}'':
	This follows from adding the inequality
	with $(x,x\dualspace)$ and $(y,y\dualspace)$ exchanged.

	``\ref{item:smoothnes:4}$\Rightarrow$\ref{item:smoothnes:5}''
	and
	``\ref{item:smoothnes:5}$\Rightarrow$\ref{item:smoothnes:6}''
	are straightforward.
\end{proof}
\begin{remark}\leavevmode
	\label{rem:counterexample_and_open_question}
	\begin{enumerate}
		\item
			In the case
			\ref{item:relations_2},
			the missing implication
			``\ref{item:smoothnes:1}$\Rightarrow$\ref{item:smoothnes:3}''
			cannot hold due to the counterexample by \cite[Section~2]{Drori2020},
			which is for the Euclidean case $X=\R^n$.

		\item
			In the general case,
			it is currently not clear whether any of
			the (equivalent) conditions
			\ref{item:smoothnes:1},
			\ref{item:smoothnes:2},
			\ref{item:smoothnes:6},
			\ref{item:smoothnes:5}
			implies condition
			\ref{item:smoothnes:4}.
	\end{enumerate}
\end{remark}

The next lemma shows that the failure of
\ref{item:smoothnes:3}
in case \ref{item:relations_2}
is due to the missing convexity of the range of $f'$.
Since any of the properties in \cref{thm:smoothnes} imply Gâteaux differentiability,
we will formulate all the following results for a Gâteaux differentiable function to simplify the presentation.

\begin{lemma}[\ref{item:smoothnes:4}\texorpdfstring{$\Rightarrow$}{=>}\ref{item:smoothnes:3}]
	\label{lem:on_smoothness_3}
	Let $O \subset X$ be an open and convex subset of the Banach space $X$.
	Suppose that
	the convex, lower semicontinuous and
	Gâteaux differentiable function
	$f \colon O \to \R$
	satisfies
	\cref{thm:smoothnes}~\ref{item:smoothnes:4},
	i.e.,
	$f'$ is cocoercive with constant $L > 0$.
	Further, let $x,y \in O$ be given, such that
	\begin{equation*}
		[f'(x), f'(y)]
		:=
		\set{
			(1-\lambda) f'(x) + \lambda f'(y)
			\given
			\lambda \in [0,1]
		}
		\subset
		f'(O)
		:=
		\set{
			f'(z)
			\given
			z \in O
		}
		.
	\end{equation*}
	Then,
	\begin{equation*}
		f(y) \ge f(x) + \dual{f'(x)}{y - x}_X + \frac 1{2 L} \norm{f'(y) - f'(x)}_{X\dualspace}^2
		.
	\end{equation*}
	That is,
	\cref{thm:smoothnes}~\ref{item:smoothnes:3}
	holds on a subset of $O$.
\end{lemma}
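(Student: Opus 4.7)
The strategy is to upgrade the cocoercivity hypothesis \ref{item:smoothnes:4} to the quadratic lower bound of \ref{item:smoothnes:3} for the pair $(x,y)$ by discretizing the dual segment $[f'(x),f'(y)] \subset f'(O)$ and telescoping subgradient inequalities along a chain of preimages. Before doing so, it is convenient to normalize: I replace $f$ by
\begin{equation*}
	\phi(z) := f(z) - f(x) - \dual{f'(x)}{z - x}_X, \qquad z \in O,
\end{equation*}
which is still convex, lower semicontinuous, and Gâteaux differentiable. Its derivative $\phi' = f' - f'(x)$ remains $L$-cocoercive and satisfies $\phi(x) = 0$ and $\phi'(x) = 0$, so the claim reduces to $\phi(y) \ge \frac{1}{2L} \norm{\phi'(y)}_{X\dualspace}^2$, while the hypothesis on the segment translates to $[0,\phi'(y)] \subset \phi'(O)$.

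For each $n \in \N$, I set $\lambda_i := i/n$ and use the hypothesis to pick $z_0,\dots,z_n \in O$ with $\phi'(z_i) = \lambda_i \phi'(y)$, taking $z_0 := x$ and $z_n := y$ at the endpoints. Two ingredients drive the estimate. The subgradient inequality for $\phi$ at $z_i$ yields
\begin{equation*}
	\phi(z_{i+1}) - \phi(z_i) \ge \dual{\phi'(z_i)}{z_{i+1} - z_i}_X = \lambda_i \dual{\phi'(y)}{z_{i+1} - z_i}_X.
\end{equation*}
Second, cocoercivity applied to the pair $(z_i, z_{i+1})$, combined with $\phi'(z_{i+1}) - \phi'(z_i) = \frac{1}{n} \phi'(y)$, gives
\begin{equation*}
	\dual{\phi'(y)}{z_{i+1} - z_i}_X \ge \frac{1}{nL} \norm{\phi'(y)}_{X\dualspace}^2.
\end{equation*}

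Telescoping the first estimate over $i = 0,\dots,n-1$ and inserting the second bound (both sides of which are nonnegative, so multiplication by $\lambda_i \ge 0$ preserves the inequality) produces
\begin{equation*}
	\phi(y) = \phi(y) - \phi(x) \ge \frac{1}{nL} \norm{\phi'(y)}_{X\dualspace}^2 \sum_{i=0}^{n-1} \lambda_i = \frac{n-1}{2nL} \norm{\phi'(y)}_{X\dualspace}^2,
\end{equation*}
and letting $n \to \infty$ completes the proof. The main subtlety is that the preimages $z_i$ are not uniquely determined since $\phi'$ need not be injective, but only finitely many are required at each discretization level and each one is supplied by the hypothesis $[f'(x),f'(y)] \subset f'(O)$; crucially, the final bound is independent of the particular selection. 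The argument is a Riemann--Stieltjes integration $\int_0^1 \lambda \, d\dual{\phi'(y)}{z_\lambda}_X$ in disguise, and the interplay of the coefficient $\lambda_i$ coming from convexity with the uniform mesh bound $1/(nL)$ coming from cocoercivity produces the sharp constant $1/(2L)$ in the limit via $\sum_{i=0}^{n-1} i/n^2 \to 1/2$.
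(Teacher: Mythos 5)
Your proof is correct and follows essentially the same route as the paper's: discretize the dual segment $[f'(x),f'(y)]$ into $n$ equal steps, choose preimages $z_i\in O$ under $f'$, telescope the gradient (subgradient) inequality along the chain, apply cocoercivity to each consecutive pair, and let $n\to\infty$ to recover the constant $\frac{1}{2L}$. The only difference is the cosmetic normalization via $\phi(z)=f(z)-f(x)-\dual{f'(x)}{z-x}_X$, which slightly streamlines the index bookkeeping but does not change the argument.
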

\begin{proof}
	Let $n \in \N$ be arbitrary
	and define
	\begin{equation}
		\label{eq:xip}
		x_i\dualspace :=
		\parens*{
			1 - \frac{i}{n}
		} f'(x)
		+
		\frac{i}{n} f'(y)
		.
	\end{equation}
	Due to our assumption, there exists $x_i \in O$, $i = 1,\ldots, n-1$,
	such that $f'(x_i) = x_i\dualspace$ for all $i = 1, \ldots, n-1$.
	We further set $x_0 := x$ and $x_n := y$.
	Now, we have
	\begin{align*}
		&f(y) - f(x) - \dual{f'(x)}{y - x}_X
		=
		\sum_{i = 1}^n f(x_i) - f(x_{i-1}) - \dual{f'(x_0)}{x_i - x_{i-1}}_X
		\\&\qquad
		\ge
		\sum_{i = 1}^n \dual{f'(x_{i-1}) - f'(x_0)}{x_i - x_{i-1}}_X
		=
		\sum_{i = 1}^n (i-1) \dual{f'(x_i) - f'(x_{i-1})}{x_i - x_{i-1}}_X
		\\&\qquad
		\ge
		\frac1L \sum_{i = 1}^n (i-1) \norm{f'(x_i) - f'(x_{i-1})}_{X\dualspace}^2
		=
		\frac1{L n^2} \sum_{i = 1}^n (i-1) \norm{f'(y) - f'(x)}_{X\dualspace}^2
		\\&\qquad
		=
		\frac{n (n-1)}{2 L n^2} \norm{f'(y) - f'(x)}_{X\dualspace}^2
		.
	\end{align*}
	Passing to the limit $n \to \infty$ yields the claim.
\end{proof}

Under slightly stronger assumptions on $f$ than in the previous lemma,  condition \ref{item:smoothnes:2} implies \ref{item:smoothnes:4}.

\begin{lemma}[\ref{item:smoothnes:2}\texorpdfstring{$\Rightarrow$}{=>}\ref{item:smoothnes:4}]
	\label{hatnochgefehlt}
	Let $O \subset X$ be an open and convex subset of the Banach space $X$.
	Suppose that $f \colon O \to \R$ is convex,  lower semicontinuous, and Gâteaux differentiable.
	Let $L>0$ be given such that \cref{thm:smoothnes}~\ref{item:smoothnes:2}, which is
	\begin{equation*}
		f(y) \le f(x) + \dual{f'(x)}{y - x}_X + \frac L2 \norm{y - x}_X^2
		\qquad\forall x,y \in O
		,
	\end{equation*}
	is satisfied.
	Let $\rho>0$ and $x,y\in O_\rho$ be given with $O_\rho$ defined in \eqref{eq_O_rho}.
	If
	\[
	 [f'(x), f'(y)] \subset f'(O_\rho),
	\]
	then
	\begin{equation}
		\label{eq:yet_another_inequality}
		\dual{f'(y) - f'(x)}{y - x}_X \ge \frac1L \norm{f'(y) - f'(x)}_{X\dualspace}^2
	\end{equation}
	holds,
	which shows that \cref{thm:smoothnes}~\ref{item:smoothnes:4} holds on a subset of $O$.
\end{lemma}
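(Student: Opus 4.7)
The plan is to follow the telescoping strategy of \cref{lem:on_smoothness_3} but, since only the descent lemma is at our disposal rather than cocoercivity, to replace each pointwise inequality by a \emph{symmetric pair} of descent/convexity combinations at each pair of consecutive interpolation points. The Hahn--Banach theorem will supply a direction witnessing the dual norm of $f'(x_j) - f'(x_{j-1})$, playing the role that the Riesz isomorphism plays in the Hilbert case.

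First I would set up the interpolation exactly as in \cref{lem:on_smoothness_3}: for $n \in \N$, define $x_i\dualspace := (1 - i/n) f'(x) + (i/n) f'(y)$, use the hypothesis $[f'(x), f'(y)] \subset f'(O_\rho)$ to pick preimages $x_i \in O_\rho$ with $f'(x_i) = x_i\dualspace$, and set $x_0 := x$ and $x_n := y$. Writing $d\dualspace := f'(x_j) - f'(x_{j-1}) = (f'(y)-f'(x))/n$ for the constant increment, I would then, for any $\varepsilon \in (0,1)$ and each $j$, invoke the Hahn--Banach theorem to pick $z_j \in X$ with $\norm{z_j}_X = 1$ and $\dual{d\dualspace}{z_j}_X \ge (1-\varepsilon) \norm{d\dualspace}_{X\dualspace}$. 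Since $x_{j-1}, x_j \in O_\rho$, both $x_{j-1} + \mu z_j$ and $x_j - \mu z_j$ belong to $O$ for every $\mu \in (0, \rho]$, so the descent lemma applies at each.

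The heart of the argument is to combine, for each $j$, two symmetric descent/convexity pairs: the descent lemma from $x_{j-1}$ to $x_{j-1}+\mu z_j$ together with convexity at $x_j$ evaluated at $x_{j-1}+\mu z_j$, and the mirror image with descent from $x_j$ to $x_j - \mu z_j$ together with convexity at $x_{j-1}$ evaluated at $x_j - \mu z_j$. Either pair by itself only reproduces the Lipschitz bound, but on adding them the function-value differences cancel entirely and the remaining gradient terms collapse to $-\dual{d\dualspace}{x_j - x_{j-1}}_X$, leaving
\begin{equation*}
    \dual{d\dualspace}{x_j - x_{j-1}}_X \ge 2\mu(1-\varepsilon)\norm{d\dualspace}_{X\dualspace} - L\mu^2.
\end{equation*}
Since $\norm{d\dualspace}_{X\dualspace}$ decays like $1/n$, for $n$ large enough the maximizer $\mu^* := (1-\varepsilon)\norm{d\dualspace}_{X\dualspace}/L$ lies in $(0,\rho]$, and choosing $\mu = \mu^*$ yields $\dual{d\dualspace}{x_j - x_{j-1}}_X \ge (1-\varepsilon)^2 \norm{d\dualspace}_{X\dualspace}^2/L$.

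Summing over $j = 1, \ldots, n$ telescopes the left-hand side to $\dual{d\dualspace}{y - x}_X = \frac{1}{n} \dual{f'(y)-f'(x)}{y-x}_X$ and the right-hand side to $(1-\varepsilon)^2 \norm{f'(y)-f'(x)}_{X\dualspace}^2 / (nL)$; multiplying by $n$ and sending $\varepsilon \searrow 0$ produces \eqref{eq:yet_another_inequality}. The main obstacle I anticipate is recognising the symmetric pairing: a single descent/convexity step loses a factor of two and only yields cocoercivity with the suboptimal constant $1/(2L)$, and it is precisely the cancellation of the unknown $f$-value differences combined with the doubling of the Hahn--Banach contribution that restores the correct constant $1/L$.
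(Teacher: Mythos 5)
Your proposal is correct and is essentially the paper's own argument: the same symmetric combination of two convexity inequalities and two descent-lemma inequalities at perturbed points, with a test direction nearly attaining the dual norm of the gradient increment and of optimal length $\approx \frac1L\norm{f'(y)-f'(x)}_{X\dualspace}$, followed by the dual-space interpolation and telescoping. The only (immaterial) difference is the order of operations — the paper first proves the estimate for pairs with $\norm{f'(y)-f'(x)}_{X\dualspace} < L\rho$ and then subdivides, whereas you subdivide first and apply the local estimate to each segment.
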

\begin{proof}
First, we consider the case that
$x,y\in O_\rho$ are given
such that
$\norm{f'(y) - f'(x)}_{X\dualspace} < L\rho$.
Take $d\in X$ with $\norm{d}_X<\rho$. Then using the convexity and property \ref{item:smoothnes:2},
we get
\begin{align*}
	\dual{f'(y) - f'(x)}{d}_X
 &= \dual{f'(y)-f'(x)}{y-x}_X + \dual{f'(y)-f'(x)}{d-y+x}_X \\
 &\le \dual{f'(y)-f'(x)}{y-x}_X + f(x+d)-f(y) + f(y-d)-f(x) \\
 & \le  \dual{f'(y)-f'(x)}{y-x}_X + \dual{f'(y)-f'(x)}{-d}_X + L \norm{d}_X^2.
\end{align*}
This implies
\[
 \dual{f'(y)-f'(x)}{d}_X - \frac L2\norm{d}_X^2  \le \frac12\dual{f'(y)-f'(x)}{y-x}_X.
\]
For arbitrary $\varepsilon>0$, we can choose $d \in X$ with $\norm{d}_X = \frac1L\norm{f'(y) - f'(x)}_{X\dualspace}$ such that $\dual{f'(y)-f'(x)}{d}_X \ge \frac1L\norm{f'(y) - f'(x)}_{X\dualspace}^2 -\varepsilon$.
This implies
\[
 \frac 1{2L}\norm{f'(y) - f'(x)}_{X\dualspace}^2 -\varepsilon  \le \frac12\dual{f'(y)-f'(x)}{y-x}_X.
\]
Since $\varepsilon>0$ was arbitrary, this shows \eqref{eq:yet_another_inequality} in case that $\norm{f'(y) - f'(x)}_{X\dualspace}$ is small.

Now, let $x,y \in O_\rho$ be given such that
$[f'(x), f'(x)] \subset f'(O_\rho)$.
We use the same construction as in the proof of \cref{lem:on_smoothness_3}. Let a number $n\in \N$ be given such that
$\norm{f'(y) - f'(x)}_{X\dualspace} < L \rho n$.
Let $x_i\dualspace$ be given as in \eqref{eq:xip}
and choose $x_i \in O_\rho$ with 
$f'(x_i) = x_i\dualspace$ for all $i=1, \dots, n-1$.
We set $x_0=x$ and $x_n:=y$.
By construction, we have $\norm{f'(x_{i+1})-f'(x_i)}_{X\dualspace} = \frac1n \norm{f'(y) - f'(x)}_{X\dualspace} < L\rho$.
Then, the first part of the proof gives
\[
	\frac1L \norm{f'(x_i) - f'(x_{i-1})}_{X\dualspace}^2 \le \dual{f'(x_i)-f'(x_{i-1})}{x_i-x_{i-1}}_X \quad \forall i=1,\dots, n.
\]
By construction, this implies
\[
	\frac 1{Ln^2} \norm{f'(y) - f'(x)}_{X\dualspace}^2 \le \frac1{n}\dual{f'(y)-f'(x)}{x_i-x_{i-1}}_X \quad \forall i=1,\dots, n.
\]
Summation over $i=1,\dots, n$ yields the claim.
\end{proof}

\ifbiber
	\printbibliography
\else
	\bibliographystyle{plainnat}
	\bibliography{references}
\fi

\end{document}